\newcommand{\bi}{{\bf i}}
\newcommand{\bj}{{\bf j}}
\newcommand{\bk}{{\bf k}}
\newcommand{\bc}{{\mathbb C}}
\newcommand{\br}{{\mathbb R}}
\newcommand{\bh}{{\mathbb H}}
\newcommand{\F}{{\mathbb F}}
\newtheorem{thm}{Theorem}[section]
\newtheorem{lem}{Lemma}[section]
\newtheorem{exam}{Example}[section]
\newtheorem{defi}{Definition}[section]
\begin{document}
\title{On left spectrum of a split quaternionic matrix}
\author{ Wensheng Cao \\
School of Mathematics and Computational Science,\\
Wuyi University, Jiangmen, Guangdong 529020, P.R. China\\
e-mail: {\tt wenscao@aliyun.com}
}
\date{}
\maketitle

\bigskip
{\bf Abstract} \,\,  In noncommutative and nondivision algebra, left spectrum of  matrices   are less known and is not easy to handle.  Split quaternion algebra is a  noncommutative and nondivision algebra.  In this paper, by the formulas of solving the equations $ax=b$ and $ax^2+bx+c=0$ over split quaternions, we make an attempt to understand the left spectrum of a split quaternion matrix of order 2.

\vspace{3mm}\baselineskip 12pt

\vspace{3mm}\baselineskip 12pt

{\bf Keywords and phrases:} \  Nondivision and noncommutative algebra, Split quaternions, Left eigenvalue

{\bf Mathematics Subject Classifications  (2020):}\ \ {\rm 11C08; 11E88; 15A66}

\section{Introduction}
Let $\br$ and $\bc$ be the field of  real and  complex numbers, respectively. The quaternion algebra $\bh$  and split quaternion algebra $\bh_s$  are noncommutative extensions of the complex numbers.
Let $\F=\bh$ or $\bh_s$. Then $\F$ can be represented as
$$\F=\{x=x_0+x_1\bi+x_2\bj+x_3\bk,x_i\in \br,i=0,1,2,3\},$$
where $1,\bi,\bj,\bk$ are basis of $\F$ satisfying the following multiplication rules:

\begin{table}[h]  \centering
	\caption{The multiplication tables for $\bh$ and $\bh_s$}
	\vspace{3mm}
	\begin{tabular}{c|cccc}
		$\bh$	&1& $\bi$ &  $\bj$  &  $\bk$\\
		\hline
		1 & 1& $\bi$ &  $\bj$  &  $\bk$\\
		$\bi$ &$\bi$ &-1 & $\bk$ & -$\bj$ \\
		$\bj$&$\bj$&-$\bk$ &-1& $\bi$  \\
		$\bk$	&$\bk$&$\bj$ & $-\bi$ & -1
	\end{tabular}\quad\quad\quad
	\begin{tabular}{c|cccc}
		$\bh_s$	&1& $\bi$ &  $\bj$  &  $\bk$\\
		\hline
		1 & 1& $\bi$ &  $\bj$  &  $\bk$\\
		$\bi$ &$\bi$ &-1 & $\bk$ & -$\bj$ \\
		$\bj$&$\bj$&-$\bk$ &1& -$\bi$  \\
		$\bk$	&$\bk$&$\bj$ & $\bi$ & 1
	\end{tabular}
\end{table}

Let $\bar{x}=x_0-x_1\bi-x_2\bj-x_3\bk$, \begin{equation}\Re(x)=(x+\bar{x})/2=x_0,\,\, \Im(x)=(x-\bar{x})/2=x_1\bi+x_2\bj+x_3\bk\end{equation}
 be  respectively the conjugate,  real part  and imaginary part of $x\in\F$. Obviously we have 
  $$\F=\bc\oplus\bc\bj,\,\,\bj z=\bar{z}\bj \mbox{ for } z\in \bc.$$
That is,  each element  $x\in \F$ can be expressed  as
\begin{equation}\label{crex}x=(x_0+x_1\bi)+(x_2+x_3\bi)\bj =z_1+z_2\bj=z_1+\bj\overline{z_2},z_1,z_2\in \bc.\end{equation}
For $x\in \F$, we define
\begin{equation}\label{Ix}I_x=\bar{x}x=x\bar{x}.\end{equation}
It can be easily verified that $$ \overline{xy}=\bar{y}\bar{x},\  I_{yx}=I_yI_x, \forall x,y\in\F.$$
The set $\F^{m\times n}$ denotes all $m\times n$ type matrices over $\F$. 
The left  and right scalar multiplication is defined as
$$qA=(qa_{ij}),\, Aq=(a_{ij}q), \, \forall q\in \F,\, A=(a_{ij})\in \F^{m\times n}.$$

In noncommutative algebra $\F$, there are two types of eigenvalues.

\begin{defi}\label{def1}(c.f.\cite{ala12,ozd2013,hs2001}) Let $A\in\F^{n\times n}$ and $\lambda\in \F$.   If  $\lambda$ holds the equation $Ax =
	\lambda	x$ (resp. $Ax =
	x \lambda$) for $0\neq x\in \F^{n}$, then $\lambda$  is called a left (resp. right) eigenvalue of $A$. The set of distinct left (resp. right) eigenvalues is called the left (resp. right)	spectrum of A, denoted $\sigma_l(A)$ (resp. $\sigma_r(A)$) .
\end{defi}

In quaternion algebra, the study of left eigenvalues stemmed from the Lee  and Cohn's \cite{cohn77,lee49}  question whether left eigenvalue always exists. By using of topological method,  Wood \cite{wood85}  confirmed  that the left eigenvalue always exists.  Huang and So \cite{hs2001} reduced  the computation of $\lambda\in \sigma_l(A), A\in \bh^{2\times 2}$ to solving
a quaternionic quadratic equation \cite{hs02}.   So \cite{so05} also showed the existence of  $\lambda\in \sigma_l(A), A\in \bh^{3\times 3}$ is equivalent to the   the existence of a solution of a generalized quaternionic polynomial of degree 3, which is guaranteed by the fundamental theorem of algebra for quaternions \cite{eilen44}.
It is still an open problem whether this algebraic approach works for general $n\times n$ matrices for $n\ge 4$.

A division algebra is an algebra, where the zero element is the only noninvertible element. The quaternion algebra is a division algebra, while split quaternion algebra is a nondivision algebra.
The set of zero divisors is denoted by  \begin{equation}
	Z(\bh_s)=\{x\in \bh_s:I_x=x_0^2+x_1^2-x_2^2-x_3^2=|z_1|^2-|z_2|^2=0\}.
\end{equation}
The mechanism of solving equations is not well established in  noncommutative and nondivision algebra. Due to this reason,  it is difficult to consider the problem of matrix eigenvalue in such an  algebra.  The research of  eigenvalues over split quaternions is at incipient stage.

 According to (\ref{crex}), $A\in \bh_s^{n\times n}$ can be represented by  $A=A_1+A_2\bj$ with $A_1,A_2\in  \bc^{n\times n}$ and the complex  adjoint matrix of $A$ is defined by
 \begin{equation}\label{creA}\chi(A)=\left(\begin{array}{cc}
 		A_1&A_2  \\
 		\overline{A_2}&\overline{A_1}   \\
 	\end{array}\right).\end{equation}

  There are several attempts to understand the eigenvalues of a matrix $A$ over $\bh_s$ by using of its complex  adjoint matrix $\chi(A)$.  Alagoz  Oral and Yuce \cite{ala12} proposed Definition \ref{def1} in  split quaternions and showed that 
$A$ is invertible is equivalent to $A$ has  no zero eigenvalue. Erdogdu and Ozdemir \cite{ozd2013} showed (in Definition \ref{def1}) that $\sigma_r(A)\cap \bc=\sigma(\chi(A))$ and reduced $Ax=\lambda x$ to 
\begin{equation}\left(\begin{array}{cc}
	A_1-\lambda_1 E_n &A_2-\lambda_2 E_n  \\
	\overline{A_2-\lambda_2 E_n}&\overline{A_1-\lambda_1 E_n} \\
\end{array}\right)\left(\begin{array}{c}
	z_1  \\
	\overline{z_2}   \\
\end{array}\right)=\left(\begin{array}{c}
	0  \\
	0   \\
\end{array}\right),
\end{equation}
where $x=z_1+z_2\bj, \lambda= \lambda_1+ \lambda_2\bj$ with $z_i\in \bc^n,\lambda_i\in \bc,i=1,2$ and  $E_n$ is the
identity matrix of order $n$.

  In a division algebra,  the condition $x\neq 0$ is equivalent to saying that the vector $x$ contains at least one invertible component. Janovska and Opfer   have noticed this and refined Definition 1 as follows.

\begin{defi}(cf.\cite[definition 1.1]{Opfer16})\label{def2} Let $A\in\bh_s^{n\times n}$, $\lambda\in \bh_s$.   If  $\lambda$ holds the equation $$Ax =
	\lambda	x \mbox{ (resp. } Ax =
	x \lambda \mbox{)}$$ for   $x\notin Z(\bh_s)^n$, then $\lambda$  is called a left (resp. right) eigenvalue of $A$. The set of distinct left (resp. right) eigenvalues is called the left (resp. right)	spectrum of A, denoted $\sigma_l(A)$ (resp. $\sigma_r(A)$).
	Let 
	 \begin{equation}V_l(\lambda)=\{x\in \bh_s^{n}:Ax =
	\lambda x\}, \mbox{ if } \lambda\in \sigma_l(A)\end{equation}
	and 
 \begin{equation} V_r(\lambda)=\{x\in \bh_s^{n}:AX =
	x\lambda\},  \mbox{ if } \lambda\in \sigma_r(A).\end{equation}
\end{defi}

There  are perhaps two reasons for the requirement $x\notin Z(\bh_s)^n$ in the above definition. The first one is  Theorem 1.4 of \cite{Opfer16}, which says that if $A$ is a matrix over an arbitrary algebra with two different  eigenvalues $\lambda_1$ and $\lambda_2$ with respect to the same eigenvector $x$, then $\lambda_1=\lambda_2$.   The another reason is that the identity map $E_n:x\to x$ should have only one left eigenvalue $1$. That is, $E_nx=\lambda x$, which is equivalent to $(\lambda-1)x=0$, should imply that $\lambda=1$.

 Janovska and Opfer focused on  right eigenvalue problem in \cite{Opfer16}. One of the main results is to show that $A=\left(\begin{array}{cc}
 	1&\bi  \\
 	\bj&\bk   \\
 \end{array}\right)$ has no right eigenvalue \cite[Theorem 4.6]{Opfer16}. This example shows that there are matrices over $\bh_s$ without right eigenvalue. However, the necessary and sufficient condition of the existence of right eigenvalue  is still unknown.

Recently,  Falcao, Opfer and Janovska etc.\cite{Irene,Opfer17,Opfer18} have developed Niven's algorithm to solve the unilateral polynomials over $\bh_s$.  Cao and Chang \cite{cao,caoaxiom,caoarxiv} have derived 
explicit formulas for solving  the linear equation $ax=b$ and quadratic equation $ax^{2}+bx+c=0$.  These methods of solving split quaternion eqautions   make it possible for us  to consider the eigenvalue problem of matrices over $\bh_s$.

In this paper,  we focus on  the left eigenvalues of a split quaternionic matrix in the sense of  Definition \ref{def2}. 
By the formulas of solving the equations $ax=b$ and $ax^2+bx+c=0$ in split quaternions, we make an attempt to  understand the left eigenvalues for a split quaternion matrix of order $2$.
We show that  all triangle matrices always have left eigenvalues (see Theorems 3.1 and  3.2). For a general matrix $A=\left(\begin{array}{cc}
	a&b  \\
	c&d   \\
\end{array}\right)$, $A$ has a left eigenvalue if and only if  either  $cx^2+(d-a)x-b=0$ or $bx^2+(a-d)x-c=0$ is solvable (see Theorems 3.3).

\section{Several lemmas}
The following lemma is crucial to us.
\begin{lem}\label{lemobs}\begin{itemize}
		\item[(1)]
		For $p,q\in \bh_s$ and $A\in\bh_s^{n\times n}$, we have $$\sigma_l(pE_n + qA) = \{p + q\lambda: \lambda\in \sigma_l(A)\}.$$
		
		\item[(2)] Let $p,q\in \bh_s-Z(\bh_s)$. Then	$\lambda \in \sigma_l(A)$ if and only if $p\lambda q \in \sigma_l(pAq).$
		\item[(3)]
		If $x,y\in V_l(\lambda)$,  then $$A(x\mu_1+y\mu_2)=\lambda(x\mu_1+y\mu_2),\forall \mu_1,\mu_2\in \bh_s.$$
		This implies that $V_l(\lambda)$ is a right vector space over $\bh_s$.
	\end{itemize}
	
\end{lem}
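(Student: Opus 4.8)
The plan is to prove the three items essentially by unwinding Definition \ref{def2} and exploiting the algebraic structure of $\bh_s$ that the earlier formulas for $ax=b$ provide. For item (1), I would start from the equivalence $\mu \in \sigma_l(pE_n+qA)$ iff there is $x \notin Z(\bh_s)^n$ with $(pE_n+qA)x = \mu x$, i.e. $qAx = (\mu - p)x$. The subtlety is that $q$ need not be invertible, so I cannot simply write $Ax = q^{-1}(\mu-p)x$. I would therefore argue directly: given $\lambda \in \sigma_l(A)$ with eigenvector $x$, compute $(pE_n+qA)x = px + qAx = px + q\lambda x = (p+q\lambda)x$, so $p+q\lambda \in \sigma_l(pE_n+qA)$ with the \emph{same} eigenvector $x$ (which still lies outside $Z(\bh_s)^n$). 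For the reverse inclusion, suppose $\mu \in \sigma_l(pE_n+qA)$ with eigenvector $x$; then $qAx=(\mu-p)x$, and I want to produce $\lambda$ with $Ax=\lambda x$ and $\mu = p+q\lambda$. Here the existence result for the linear equation $q\lambda = \mu-p$ over $\bh_s$ — which requires care since $q$ may be a zero divisor — is what must be invoked; I expect one needs either $q \notin Z(\bh_s)$ or a componentwise consistency argument. Given the clean statement (no hypothesis on $p,q$ in item (1)), I anticipate the intended argument is the straightforward direction-by-direction substitution, reading $p+q\lambda$ as a formal expression, so the main content is just the bidirectional eigenvector-preserving computation above.

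For item (2), I would use that $p,q \notin Z(\bh_s)$ are invertible in $\bh_s$. Suppose $\lambda \in \sigma_l(A)$ with $Ax = \lambda x$, $x \notin Z(\bh_s)^n$. Set $y = q^{-1}x$; since $q^{-1} \notin Z(\bh_s)$, multiplying each component of $x$ by the invertible $q^{-1}$ keeps $y \notin Z(\bh_s)^n$ (an invertible element times a non-zero-divisor is a non-zero-divisor, as $I_{q^{-1}x_i} = I_{q^{-1}}I_{x_i} \neq 0$). Then $(pAq)y = pAq q^{-1} x = pAx = p\lambda x = p\lambda q q^{-1} x = (p\lambda q) y$, so $p\lambda q \in \sigma_l(pAq)$. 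The converse follows by applying this with $A \mapsto pAq$, $p \mapsto p^{-1}$, $q \mapsto q^{-1}$, and $\lambda \mapsto p\lambda q$, using $p^{-1}(pAq)q^{-1} = A$ and $p^{-1}(p\lambda q)q^{-1} = \lambda$. The only thing to be careful about is that $p\lambda q$ should range over distinct values as $\lambda$ does, which holds because $x \mapsto pxq$ is a bijection of $\bh_s$ when $p,q$ are invertible.

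For item (3), the computation is immediate from right-linearity of matrix–vector multiplication over $\bh_s$: $A(x\mu_1 + y\mu_2) = (Ax)\mu_1 + (Ay)\mu_2 = (\lambda x)\mu_1 + (\lambda y)\mu_2 = \lambda(x\mu_1) + \lambda(y\mu_2) = \lambda(x\mu_1 + y\mu_2)$, where the key associativity used is $(\lambda x_i)\mu_k = \lambda(x_i\mu_k)$ in $\bh_s$. To conclude that $V_l(\lambda)$ is a right vector space over $\bh_s$ I would note it is closed under the right action (just shown) and is clearly closed under addition, contains $0$, and inherits all the right-module axioms from $\bh_s^n$; so it is a right submodule of $\bh_s^n$, and since $\bh_s$ is (as an algebra) a ring with $1$, "right vector space" is used here in the sense of right module. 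I would flag that, because $\bh_s$ is not a division ring, "vector space" must be understood in this module sense — but no further work is needed.

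The main obstacle I foresee is entirely in the reverse direction of item (1): showing that any $\mu$ arising as a left eigenvalue of $pE_n+qA$ is genuinely of the form $p+q\lambda$ for some $\lambda \in \sigma_l(A)$, which forces me to solve $q\lambda = \mu - p$ in $\bh_s$ and — if $q$ is allowed to be a zero divisor — to check that a solution exists and that the resulting $\lambda$ satisfies $Ax = \lambda x$ for the given eigenvector $x$. If the paper intends $p,q$ to be arbitrary, the cleanest route is probably to observe that $qAx = (\mu-p)x$ already exhibits $Ax$ as a solution of a per-component linear equation in $\lambda$, and then invoke the authors' explicit formula for $ax=b$ over $\bh_s$ (from \cite{cao,caoaxiom,caoarxiv}) to extract a consistent $\lambda$; everything else in the lemma is routine substitution and bookkeeping with zero divisors.
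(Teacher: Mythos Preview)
Your argument for the forward inclusion in (1), for the forward direction in (2), and for (3) is exactly what the paper does: the paper's proof is just the three one-line computations $(pE_n+qA)x=(p+q\lambda)x$, $(pAq)(q^{-1}x)=(p\lambda q)(q^{-1}x)$ with $q^{-1}x\notin Z(\bh_s)^n$, and $A(x\mu_1+y\mu_2)=\lambda(x\mu_1+y\mu_2)$. Your write-up of the converse in (2) via $p^{-1},q^{-1}$ is the natural completion the paper leaves implicit.

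Where you diverge from the paper is in worrying about the reverse inclusion $\sigma_l(pE_n+qA)\subseteq\{p+q\lambda:\lambda\in\sigma_l(A)\}$ in (1). The paper simply does not prove it: its proof of (1) consists only of the forward computation, and the lemma is later applied only in that direction (with $q=1$, in the decomposition $A=aE_2+B$ of Theorem~\ref{mainthm}). Your instinct that this reverse inclusion is problematic for general $q$ is correct, and your proposed rescue via the $ax=b$ formula cannot work in full generality: from $qAx=(\mu-p)x$ one gets $q(Ax-\lambda x)=0$ for any $\lambda$ with $q\lambda=\mu-p$, which does not force $Ax=\lambda x$ when $q\in Z(\bh_s)$. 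Indeed the stated equality can fail outright: take $q=0$ and let $A$ be the matrix of Example~\ref{exam3.4}, which has $\sigma_l(A)=\emptyset$; then the right-hand side is empty while $\sigma_l(pE_n)=\{p\}$. So the honest reading of item (1) is as the inclusion $\{p+q\lambda:\lambda\in\sigma_l(A)\}\subseteq\sigma_l(pE_n+qA)$, which is all the paper proves and all it needs; you should not spend effort trying to close a gap that the paper neither closes nor requires.
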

\begin{proof}
	If $\lambda\in \sigma_l(A)$, then there exists an $x\notin Z(\bh_s)^n$ such that $Ax=\lambda x$. Therefore
	$$(pE_n + qA)x=px+qAx=(p + q\lambda)x.$$
	This implies that $p + q\lambda\in \sigma_l(pE_n + qA).$
	Also we have
	$$pAq(q^{-1}x)= p\lambda q (q^{-1}x),\, q^{-1}x\notin Z(\bh_s)^n.$$
	This implies that $p\lambda q\in \sigma_l(pAq).$
	It is obvious that $A(x\mu_1+y\mu_2)=\lambda(x\mu_1+y\mu_2)$ holds for $x,y\in V_l(\lambda)$. This implies that $V_l(\lambda)$ is a right vector space over $\bh_s$.
\end{proof}
By the above lemma, we can attribute the problem of left spectrum  to  solving the linear equation $ax=b$ and quadratic equation $ax^2+bx+c=0$ over $\bh_s$ in Section \ref{sect3}. We can solve such equations by two lemmas in what follows.

   The  Moore-Penrose inverse $a^+$  of  $a=z_1+z_2\bj,z_1,z_2\in \bc$ is defined in \cite{cao}  to be
$$a^+=\left\{\begin{array}{ll}
	0, & \hbox{if }\,\, $a=0;$ \\
	\frac{\overline{a}}{I_a}, & \hbox{if\, $I_a\neq 0$;} \\
	\frac{\overline{z_1}+z_2\bj}{4|z_1|^2}, & \hbox{if\, $I_a=0, a\neq 0$.} \\
\end{array}
\right.$$
For $a=z_1+z_2\bj\in  Z(\bh_s)-\{0\}$, we have the following equations:
\begin{equation}aa^+a=a,\  a^+aa^+=a^+,\  aa^+=\frac{1}{2}\big(1+\frac{z_2}{\overline{z_1}}\bj\big),
	\  a^+a=\frac{1}{2}\big(1+\frac{z_2}{z_1}\bj\big).\end{equation}

\begin{lem}(cf.\cite[Theorem 3.1]{cao})\label{lemge}
	Let $a,d\in \bh_s$. Then the equation $ax=d$ is  solvable
	if and only if $$aa^+d=d,$$
	in which case all solutions are given by $$x=a^+d+(1-a^+a)y,\quad \forall y=y_0+y_1\bi+y_2\bj+y_3\bk\in \bh_s \mbox{ with } y_i\in \br.$$
	The set of  solutions of  $ax=d$ is denoted by  \begin{equation}\label{sab}S_L(a,d):=\{x\in \bh_s:ax=d\}.\end{equation}
\end{lem}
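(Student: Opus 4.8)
The plan is to prove both directions together with the solution formula uniformly through the three mutually exclusive cases for $a$: $a=0$; $I_a\neq 0$, in which case $a$ is invertible with $a^{+}=\overline a/I_a=a^{-1}$; and $a\in Z(\bh_s)\setminus\{0\}$. What I need in each case are the three bookkeeping identities $aa^{+}a=a$, $a^{+}aa^{+}=a^{+}$, and the explicit idempotents $aa^{+}$ and $a^{+}a$. For $a=0$ all of these are trivial; for $I_a\neq 0$ one has $aa^{+}=a^{+}a=1$; and for the zero-divisor case they are precisely the four relations displayed just above the statement. Verifying those zero-divisor identities --- in particular $aa^{+}a=a$ --- is the only genuinely computational point, and it reduces to the rules $\bj c=\overline c\,\bj$ and $\bj^{2}=1$ for $c\in\bc$ together with $|z_1|=|z_2|$, which holds exactly because $I_a=|z_1|^2-|z_2|^2=0$.

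Granting these identities, necessity is immediate: if $ax=d$ for some $x$, then by associativity $aa^{+}d=aa^{+}(ax)=(aa^{+}a)x=ax=d$. For sufficiency, suppose $aa^{+}d=d$ and set $x_0=a^{+}d$; then $ax_0=a(a^{+}d)=(aa^{+})d=aa^{+}d=d$, so $x_0$ is a solution. Note that neither step used invertibility of $a$.

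For the full solution set, observe that once $x_0=a^{+}d$ is known to be a solution, an arbitrary $x$ satisfies $ax=d$ if and only if $a(x-x_0)=0$. Hence it suffices to show that the set $\{z\in\bh_s:az=0\}$ coincides with $(1-a^{+}a)\bh_s$. The inclusion $\supseteq$ follows from $a(1-a^{+}a)y=(a-aa^{+}a)y=0$ for every $y$; for $\subseteq$, if $az=0$ then $(1-a^{+}a)z=z-a^{+}(az)=z$, so $z\in(1-a^{+}a)\bh_s$. Therefore every solution has the stated form $x=a^{+}d+(1-a^{+}a)y$ with $y$ ranging over $\bh_s$, and conversely each such element solves $ax=d$. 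As remarked, the only obstacle is the case-by-case check of the Moore-Penrose identities for a nonzero zero divisor; with those in place the argument is identical to the division-algebra situation, and one could alternatively obtain the same conclusion by rewriting $ax=d$ as a $2\times 2$ linear system over $\bc$ via $\bh_s=\bc\oplus\bc\bj$, though the route through $a^{+}$ is cleaner.
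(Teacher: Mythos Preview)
Your argument is correct. The paper itself does not supply a proof of this lemma; it is imported verbatim from \cite[Theorem~3.1]{cao}, with only the four Moore--Penrose identities
\[
aa^{+}a=a,\qquad a^{+}aa^{+}=a^{+},\qquad aa^{+}=\tfrac12\bigl(1+\tfrac{z_2}{\overline{z_1}}\bj\bigr),\qquad a^{+}a=\tfrac12\bigl(1+\tfrac{z_2}{z_1}\bj\bigr)
\]
recorded (without proof) for the zero-divisor case. So there is no in-paper proof to compare against.

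Your route is the standard generalized-inverse argument and is the natural one: once $aa^{+}a=a$ is in hand, necessity ($ax=d\Rightarrow aa^{+}d=d$), sufficiency ($x_0=a^{+}d$ works), and the parametrization of the kernel via the idempotent $1-a^{+}a$ all follow formally, exactly as you wrote. The only substantive computation is the verification of $aa^{+}a=a$ when $a=z_1+z_2\bj\in Z(\bh_s)\setminus\{0\}$, and your indication that this reduces to $\bj c=\overline c\,\bj$, $\bj^{2}=1$, and $|z_1|=|z_2|$ is accurate; carrying it out gives
\[
aa^{+}a=\tfrac12\Bigl(1+\tfrac{z_2}{\overline{z_1}}\bj\Bigr)(z_1+z_2\bj)
=\tfrac12\Bigl(z_1+\tfrac{|z_2|^2}{\overline{z_1}}+2z_2\bj\Bigr)=z_1+z_2\bj=a.
\]
Nothing is missing.
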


We define the quasisimilar class of $q\in \bh_s$ as the following set \begin{equation}[q]=\{p\in \bh_s: \Re(p)=\Re(q), I_p=I_q\}.\end{equation}
For each class $[q]$, we define real coefficient quadratic polynomial
\begin{equation}\Psi_{[q]}(x)=x^2-2\Re(q)x+I_q.\end{equation}
According to \cite{caoarxiv} and references therein,  we have the following lemma.
\begin{lem}\label{eqQsolve}(c.f.\cite[Theorem 5.1]{caoarxiv})
	Let $ax^{2}+bx+c=0$ be a quadratic equation in $\bh_s$ with $a\neq 0$. Then it can be solved by the methods in \cite{caoaxiom,caoarxiv}.
	The set of  solutions of  $ax^{2}+bx+c=0$ is denoted by \begin{equation}\label{sabc}S_Q(a,b,c):=\{x\in \bh_s:ax^2+bx+c=0\}.
	\end{equation}
	If $a$ is invertible then \begin{equation}S_Q(a,b,c)=S_Q(1,a^{-1}b,a^{-1}c).\end{equation}	
	Furthermore,  suppose that the corresponding  companion  polynomial
	\begin{equation}\label{czpoly}c(x)=I_ax^4+2P_{ab}x^3+(2P_{ac}+I_b)x^2+2P_{bc}x+I_c\not\equiv 0.\end{equation}
	Let $\Psi_{[q]}(x)=x^2-Tx+N$ be a divisor of $c(x)$.  Then we have
	$$S_Q(a,b,c)=\bigcup_{[q]} \{S_L(Ta+b,aN-c)\cap [q]\}.$$
	
\end{lem}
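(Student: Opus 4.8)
The first assertion of the lemma — that $ax^{2}+bx+c=0$ is effectively solvable and that \eqref{czpoly} is the governing companion polynomial — is imported from \cite[Theorem~5.1]{caoarxiv}; the plan for the remaining (displayed) set identity is to organize the proof around one elementary fact: \emph{every} $x\in\bh_{s}$ satisfies
\begin{equation}\label{charrel}x^{2}-2\Re(x)\,x+I_{x}=0 ,\end{equation}
which is immediate from $x+\bar x=2\Re(x)\in\br$ together with $x\bar x=\bar x x=I_{x}\in\br$ (see \eqref{Ix}). Putting $T=2\Re(x)$ and $N=I_{x}$, relation \eqref{charrel} reads $x^{2}=Tx-N$; that is, $\Psi_{[q]}(x)=0$ for the class $[q]$ with $\Psi_{[q]}(t)=t^{2}-Tt+N$.

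The reduction for invertible $a$ is purely formal: since $a^{-1}$ is a unit, $ax^{2}+bx+c=0$ is equivalent to $x^{2}+a^{-1}bx+a^{-1}c=0$, so $S_{Q}(a,b,c)=S_{Q}(1,a^{-1}b,a^{-1}c)$; moreover $I_{au}=I_{a}I_{u}$ shows the two companion polynomials differ only by the scalar factor $I_{a}$, hence have the same monic quadratic divisors. For the main identity I would prove the two inclusions. The inclusion $\supseteq$ needs nothing about $c(t)$: if $x\in S_{L}(Ta+b,\,aN-c)\cap[q]$ with $\Psi_{[q]}(t)=t^{2}-Tt+N$, then $x\in[q]$ forces $\Re(x)=T/2$ and $I_{x}=N$, so $x^{2}=Tx-N$ by \eqref{charrel}, whence
\begin{equation}ax^{2}+bx+c=a(Tx-N)+bx+c=(Ta+b)\,x-(aN-c)=0 .\end{equation}

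For $\subseteq$, take $x\in S_{Q}(a,b,c)$ and set $T=2\Re(x)$, $N=I_{x}$, so $x\in[q]$ with $\Psi_{[q]}(t)=t^{2}-Tt+N$; substituting $x^{2}=Tx-N$ into $ax^{2}+bx+c=0$ yields $(Ta+b)x=aN-c$, i.e.\ $x\in S_{L}(Ta+b,aN-c)\cap[q]$ in the notation of \eqref{sab}. It remains to check that $[q]$ is one of the classes appearing in the union, i.e.\ that $\Psi_{[q]}(t)\mid c(t)$ in $\br[t]$. For this one records the factorization
\begin{equation}\label{cfact}c(t)=\widehat f(t)\,f(t),\qquad f(t)=at^{2}+bt+c,\quad\widehat f(t)=\bar a\,t^{2}+\bar b\,t+\bar c ,\end{equation}
a one-line expansion (using $\bar u u=I_{u}$ and $\bar u v+\bar v u\in\br$) whose coefficients coincide with those of \eqref{czpoly}. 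Since $\Psi_{[q]}(t)$ has real coefficients it is central in $\bh_{s}[t]$, so reduction modulo it is a ring homomorphism; with $p:=Ta+b$ and $r:=aN-c$ one has $f(t)\equiv pt-r$ and $\widehat f(t)\equiv\bar p\,t-\bar r$ modulo $\Psi_{[q]}(t)$, and therefore
\begin{equation}c(t)\equiv(\bar p\,t-\bar r)(pt-r)\equiv\big(I_{p}T-(\bar p r+\bar r p)\big)\,t+\big(I_{r}-I_{p}N\big)\pmod{\Psi_{[q]}(t)} .\end{equation}
But $(Ta+b)x=aN-c$ is the relation $px=r$, so $\bar r=\bar x\,\bar p$, giving $\bar p r+\bar r p=I_{p}x+I_{p}\bar x=I_{p}T$ and $I_{r}=I_{px}=I_{p}I_{x}=I_{p}N$; both coefficients on the right vanish, so $\Psi_{[q]}(t)\mid c(t)$. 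Since $c(t)\not\equiv0$ has only finitely many monic quadratic divisors, the union in the statement is a genuine finite union, and the two inclusions give the asserted equality.

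The one genuinely delicate point is the passage from the scalar equation $f(x)=0$ to the divisibility $\Psi_{[q]}\mid c$: this must \emph{not} be done by substituting the noncentral element $x$ into \eqref{cfact}, since evaluation of a product of $\bh_{s}$-coefficient polynomials at a fixed noncentral argument fails to be multiplicative. The correct device — reducing \eqref{cfact} modulo the \emph{central} polynomial $\Psi_{[q]}(t)$ first and only then invoking $px=r$ — is precisely the one used in \cite{caoaxiom,caoarxiv}; everything else above is routine bookkeeping.
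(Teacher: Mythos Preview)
The paper does not supply its own proof of this lemma: it is stated with the citation ``(c.f.\ \cite[Theorem~5.1]{caoarxiv})'' and then used as a black box in Section~\ref{sect3}. Your argument is correct and is exactly the standard proof one finds in the cited sources: the characteristic identity \eqref{charrel} linearizes $ax^{2}+bx+c$ to $(Ta+b)x-(aN-c)$, giving both inclusions of the set identity at once, and the divisibility $\Psi_{[q]}\mid c$ is obtained by reducing the real-coefficient factorization $c(t)=\widehat f(t)f(t)$ modulo the central polynomial $\Psi_{[q]}$ and then invoking $px=r$ to kill the linear remainder. Your explicit warning that one must \emph{not} substitute the noncentral $x$ directly into $\widehat f\cdot f$ is the one nontrivial point, and you handle it correctly. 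In short: nothing to compare against in this paper, and your write-up is a faithful reconstruction of the argument the paper outsources to \cite{caoaxiom,caoarxiv}.
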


\section{Left eigenvalue and its eigenvectors}\label{sect3}
To consider the left spectrum of $A=\left(\begin{array}{cc}
	a&b  \\
	c&d   \\
\end{array}\right)\in \bh_s^{2\times 2}$, we
will deal with the following cases:\\
\begin{equation}  b=0\mbox{ or }c=0;\quad   b\neq 0 \mbox{ and }c\neq 0.\end{equation}

\begin{thm}\label{thmb0}
	Let $A=\left(\begin{array}{cc}
		a&0  \\
		c&d   \\
	\end{array}\right)\in \bh_s^{2\times 2}$. Then we have the followings.
	\begin{itemize}
		\item[(1)]  If $S_L(a-d,c)\neq \emptyset$ then 
			\begin{equation} a\in \sigma_l(A),\,v=(1,x)^T\in V_l(a), \forall  x\in S_L(a-d,c).\end{equation} 
		
		\item[(2)] We have $0\in S_Q(c,d-a,0)\neq \emptyset $ and  \begin{equation}\lambda=cx+d\in  \sigma_l(A),\, v=(x,1)^T\in V_l(\lambda),\forall x\in S_Q(c,d-a,0).\end{equation} 
		 \item[(3)]  In particular, we have 
		 \begin{equation}d\in  \sigma_l(A),\,v=(x,1)^T\in V_l(d), \forall x\in S_L(c,0)\cap S_L(d-a,0);\end{equation} 
		  $a\in  \sigma_l(A)$ if and only if \begin{equation}S_L(a-d,c)\cup S_L(c,a-d)\neq \emptyset.\end{equation}
		   In this case		 
		    \begin{equation}\label{aeig1} a\in  \sigma_l(A)\mbox{ with eigenvectors }\left\{
		 	\begin{aligned}
		 		v=(x,1)^T,\, \forall x\in S_L(c,a-d)\mbox{ provided }S_L(c,a-d)\neq \emptyset;\\
		 		v=(1,x)^T,\, \forall x\in S_L(a-d,c)\mbox{ provided }S_L(a-d,c)\neq \emptyset.
		 	\end{aligned}
		 	\right.\end{equation}
		  
	\end{itemize}
\end{thm}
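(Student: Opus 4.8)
The plan is to unwind the eigenvalue equation $Av=\lambda v$ for the lower-triangular matrix $A=\left(\begin{smallmatrix}a&0\\c&d\end{smallmatrix}\right)$ componentwise, and in each case reduce the existence of a left eigenvector to the solvability of one of the scalar split-quaternion equations whose solution sets $S_L(\cdot,\cdot)$ and $S_Q(\cdot,\cdot,\cdot)$ are controlled by Lemmas \ref{lemge} and \ref{eqQsolve}. Writing $v=(v_1,v_2)^T$, the system $Av=\lambda v$ reads $av_1=\lambda v_1$ and $cv_1+dv_2=\lambda v_2$, and the constraint $v\notin Z(\bh_s)^2$ means at least one of $v_1,v_2$ is invertible. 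I would exploit the invertible normalisation: if $v_1\notin Z(\bh_s)$ we may scale on the right (using Lemma \ref{lemobs}(3), which makes $V_l(\lambda)$ a right vector space) to take $v_1=1$; similarly if $v_2\notin Z(\bh_s)$ we may take $v_2=1$.

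For part (1), with $v=(1,x)^T$ the first equation gives $\lambda=a$, and the second gives $cx+dx=ax$ \emph{after} noting it is $c\cdot 1 + dx = \lambda x = ax$, i.e. $c=(a-d)x$, which is exactly the statement $x\in S_L(a-d,c)$; so any such $x$ yields $a\in\sigma_l(A)$ with eigenvector $(1,x)^T$, and here $v_2$ being possibly a zero divisor is harmless since $v_1=1$ is invertible. For part (2), take $v=(x,1)^T$; the second equation forces $\lambda=cx+d$, and substituting into the first equation $ax=\lambda x=(cx+d)x=cx^2+dx$ gives $cx^2+(d-a)x=0$, i.e. $x\in S_Q(c,d-a,0)$; the claim $0\in S_Q(c,d-a,0)$ is immediate, and plugging $x=0$ back gives $\lambda=d$ as a (trivial) left eigenvalue with eigenvector $(0,1)^T$, which is never a zero divisor. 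Part (3) is then a matter of extracting special cases: the statement about $d\in\sigma_l(A)$ with $v=(x,1)^T$ comes from asking that $x$ simultaneously satisfy $cx=0$ and $(d-a)x=0$ (so that $\lambda=cx+d=d$ and the first equation $ax=dx$ holds), i.e. $x\in S_L(c,0)\cap S_L(d-a,0)$.

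The characterisation ``$a\in\sigma_l(A)$ iff $S_L(a-d,c)\cup S_L(c,a-d)\neq\emptyset$'' is the substantive assertion and requires both directions. For sufficiency: if $S_L(a-d,c)\neq\emptyset$ we already produced the eigenvector $(1,x)^T$ in part (1); if instead $S_L(c,a-d)\neq\emptyset$, pick $x$ with $cx=a-d$ and set $v=(x,1)^T$ — then $\lambda=cx+d=a$ and the first equation $ax=\lambda x=ax$ holds trivially, so $a\in\sigma_l(A)$ with eigenvector $(x,1)^T$; note in this branch $v_2=1$ is invertible so the zero-divisor constraint is automatically met. For necessity: suppose $a\in\sigma_l(A)$ with eigenvector $v=(v_1,v_2)^T\notin Z(\bh_s)^2$. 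The first equation $(a-?)$ — actually $av_1=\lambda v_1$ with $\lambda=a$ — and the second $cv_1+dv_2=av_2$. If $v_1\notin Z(\bh_s)$, right-multiply $v$ by $v_1^{-1}$ to assume $v_1=1$, and the second equation becomes $c=(a-d)v_2$, giving $v_2\in S_L(a-d,c)$. If $v_1\in Z(\bh_s)$, then necessarily $v_2\notin Z(\bh_s)$; scale to $v_2=1$, and the second equation reads $cv_1=a-d$, i.e. $v_1\in S_L(c,a-d)$. Either way one of the two solution sets is nonempty, completing the equivalence, and the displayed eigenvector description \eqref{aeig1} simply records which normalisation was used.

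The main obstacle I anticipate is the careful bookkeeping of the zero-divisor side condition: in the noncommutative, nondivision setting one cannot freely divide, so each case split (which of $v_1,v_2$ is invertible) must be handled separately, and one must check that the produced eigenvector genuinely lies outside $Z(\bh_s)^2$ — which is why normalising the \emph{invertible} component to $1$ is the right move, since a vector with an entry equal to $1$ is automatically not in $Z(\bh_s)^2$. A secondary subtlety is that when $v_1\in Z(\bh_s)$ one must justify $v_2\notin Z(\bh_s)$; this is immediate from the definition $Z(\bh_s)^2$ meaning \emph{both} coordinates are zero divisors (equivalently, $v\notin Z(\bh_s)^2$ means at least one coordinate is invertible), together with $v\notin Z(\bh_s)^2$. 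Beyond that, everything reduces to the purely algebraic identities already packaged in Lemmas \ref{lemge} and \ref{eqQsolve}, so no genuinely hard computation remains.
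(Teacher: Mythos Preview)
Your proposal is correct and follows essentially the same approach as the paper: normalise the eigenvector via Lemma~\ref{lemobs} to one of the two forms $(1,x)^T$ or $(x,1)^T$, reduce the eigenvalue system componentwise to the linear equation $(a-d)x=c$ or the quadratic $cx^2+(d-a)x=0$, and read off parts (1)--(3) from these. If anything, your treatment of the ``only if'' direction in part (3) is slightly more explicit than the paper's, which simply invokes the initial dichotomy of eigenvector types together with parts (1) and (2).
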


\begin{proof}
 	Suppose that  $\lambda\in \sigma_l(A)$ with an eigenvector $v\in V_l(\lambda)$.  By Definition \ref{def2},  $v$ contains at least one invertible component.  By Lemma \ref{lemobs}, $v\mu$  is a eigenvector for any  $\mu\in \bh_s-Z(\bh_s)$.  Choosing  a suitable $\mu\in \bh_s$, we may have two types of eigenvectors in  $V_l(\lambda)$: \begin{equation}v=(1,x)^T\mbox{  or }v=(x,1)^T.\end{equation} 
	
(1)\quad  Suppose that $A$ has the first type of  eigenvector in  $V_l(\lambda)$. Then we have 
	$$\left(\begin{array}{cc}
		a&0  \\
		c&d \\
	\end{array}\right)\left(\begin{array}{c}
		1  \\
		x   \\
	\end{array}\right)=\lambda\left(\begin{array}{c}
		1  \\
		x   \\
	\end{array}\right).$$
	That is \begin{equation}\label{rsym1b}\left\{
		\begin{aligned}
			a=\lambda,\\
			c+dx=\lambda x.
		\end{aligned}
		\right.\end{equation}
	Therefore\begin{equation}\label{b0eq}(a-d)x=c.\end{equation}
	If $S_L(a-d,c)\neq \emptyset$ then 	
	$ a\in \sigma_l(A),v=(1,x)^T\in V_l(a)$ for $\forall x\in S_L(a-d,c).$

\vspace{2mm}

(2)\quad  Suppose that $A$ has the second type of eigenvector  in  $V_l(\lambda)$. Then we have
	$$\left(\begin{array}{cc}
		a&0  \\
		c&d \\
	\end{array}\right)\left(\begin{array}{c}
		x  \\
		1   \\
	\end{array}\right)=\lambda\left(\begin{array}{c}
		x  \\
		1   \\
	\end{array}\right).$$
	That is \begin{equation}\label{rsym2}\left\{
		\begin{aligned}
			ax=\lambda x,\\
			cx+d=\lambda.
		\end{aligned}
		\right.\end{equation}
	Therefore\begin{equation}\label{b0eq1}cx^2+(d-a)x=0.\end{equation}
		It is obvious that $0\in S_Q(c,d-a,0)\neq \emptyset$. 
	Therefore $$\lambda=cx+d\in \sigma_l(A),\, v=(x,1)^T\in V_l(\lambda), \forall x\in S_Q(c,d-a,0).$$

	\vspace{2mm}

		(3)\quad 	
		By part (2), if $cx=0$ then  $d\in  \sigma_l(A)$ with $v=(x,1)^T\in V_l(d)$ for all $$x\in S_L(c,0)\cap S_Q(c,d-a,0)=S_L(c,0)\cap S_L(d-a,0).$$

		  If $cx+d=a$ is solvable, i.e.,   $S_L(c,a-d)\neq \emptyset $, then $a\in  \sigma_l(A)$ with $$v=(x,1)^T\in V_l(a),\,\forall x\in S_L(c,a-d)\cap S_Q(c,d-a,0)=S_L(c,a-d).$$
		  		   Combining with part (1), we have 
		  $a\in  \sigma_l(A)$ if and only if $S_L(a-d,c)\cup S_L(c,a-d)\neq \emptyset.$
		  Furthermore
		  		  \begin{equation*} a\in  \sigma_l(A)\mbox{ with eigenvectors }\left\{
		  	\begin{aligned}
		  		v=(x,1)^T,\, \forall x\in S_L(c,a-d);\\
		  	v=(1,x)^T,\, \forall x\in S_L(a-d,c).
		  	\end{aligned}
		  	\right.\end{equation*}

\end{proof}
\begin{exam}\label{exam3.1}
	Let	$A=\left(\begin{array}{cc}
		1+\bi&0  \\
		1+\bk&\bj+\bk\\
	\end{array}\right).$ That is, $a=1+\bi,c=1+\bk,d=\bj+\bk$.
	Note that $$0=(a-d)(a-d)^+c\neq c,\,\, 0=cc^+(a-d)\neq a-d.$$ By Lemma \ref{lemge}, we have  $$S_L(a-d,c)\cup S_L(c,a-d)= \emptyset.$$
	By Theorem \ref{thmb0}, we deduce that $a=1+\bi\notin  \sigma_l(A)$.
	
	\quad Let $p(x)=(1+\bk)x^2+(-1-\bi+\bj+\bk)x=0$. Then the   companion  polynomial is  $c(x)=-4x^3$. We have one pair $(T, N)=(0,0)$.
By  Lemma \ref{lemge}, we have
$$S_L(-1-\bi+\bj+\bk,0)=\{x\in\bh_s:x_0+x_1\bi+x_0\bj-x_1\bk,\forall x_0, x_1\in \br\}.
$$
and $$S_Q(c,d-a,0)=S_Q(1+\bk,-1-\bi+\bj+\bk,0)=\{x_1\bi-x_1\bk,\forall x_1\in \br\}.$$
	Hence
	$$\lambda=(1+\bk)(x_1\bi-x_1\bk)+\bj+\bk=(-1+\bi+\bj-\bk)x_1+\bj+\bk\in \sigma_l(A),v=(x_1\bi-x_1\bk,1)^T\in V_l(\lambda).$$

	Since $S_L(c,0)\cap S_L(d-a,0)=0,$
	we have  $d=\bj+\bk\in \sigma_l(A)$ with $v=(0,1)^T\in V_l(\bj+\bk)$.
\end{exam}

\begin{thm}\label{thmc0}
	Let $A=\left(\begin{array}{cc}
		a&b  \\
		0&d   \\
	\end{array}\right)\in \bh_s^{2\times 2}$. Then we have the followings.
	\begin{itemize}
		\item[(1)]  If $S_L(d-a,b)\neq \emptyset$ then 	
		\begin{equation} d\in \sigma_l(A),\,v=(x,1)^T\in V_l(d), \forall x\in S_L(d-a,b).\end{equation}

	\item[(2)] We have $0\in S_Q(b,a-d,0)\neq \emptyset$ and  \begin{equation}\lambda=a+bx\in  \sigma_l(A),\, v=(1,x)^T\in V_l(\lambda),\forall x\in S_Q(b,a-d,0).\end{equation}

	\item[(3)]  In particular, we have
	\begin{equation} a\in  \sigma_l(A),\,v=(1,x)^T\in V_l(a), \forall  x\in S_L(b,0)\cap S_L(a-d,0);\end{equation}   $d\in  \sigma_l(A)$ if and only if	\begin{equation}S_L(d-a,b)\cup S_L(b,d-a)\neq \emptyset.\end{equation} 
	In this case
	 \begin{equation} d\in  \sigma_l(A)\mbox{ with eigenvectors }\left\{
		\begin{aligned}
			v=(x,1)^T,\, \forall x\in S_L(d-a,b)\mbox{ provided }S_L(d-a,b)\neq \emptyset;\\
			v=(1,x)^T,\, \forall x\in S_L(b,d-a)\mbox{ provided }S_L(b,d-a)\neq \emptyset.
		\end{aligned}
		\right.\end{equation}
		\end{itemize}
\end{thm}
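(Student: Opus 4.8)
The plan is to mirror the proof of Theorem \ref{thmb0}, exploiting the near-symmetry between the upper-triangular case here and the lower-triangular case there. Indeed, if one swaps the two coordinates via the permutation matrix $P=\left(\begin{array}{cc}0&1\\1&0\end{array}\right)$, one has $P^{-1}AP=\left(\begin{array}{cc}d&0\\b&a\end{array}\right)$, and by Lemma \ref{lemobs}(2) (applied with $p=P^{-1}$, $q=P$, both invertible and outside $Z(\bh_s)$ entrywise in the appropriate sense) the left spectrum is unchanged while eigenvectors are permuted. So one option is to simply invoke Theorem \ref{thmb0} with $a\leftrightarrow d$, $c\leftrightarrow b$, and translate the eigenvector statements back. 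However, since the paper presents the two theorems in parallel, I would instead give the direct argument so the eigenvector bookkeeping is transparent.

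First I would observe, exactly as in the proof of Theorem \ref{thmb0}, that if $\lambda\in\sigma_l(A)$ with eigenvector $v\in V_l(\lambda)$, then $v$ has an invertible component, so by Lemma \ref{lemobs}(3) we may rescale on the right to assume $v=(1,x)^T$ or $v=(x,1)^T$. For the first type, writing out $A(1,x)^T=\lambda(1,x)^T$ gives $a+bx=\lambda$ and $dx=\lambda x$; substituting the first into the second yields $dx=(a+bx)x$, i.e.\ $bx^2+(a-d)x=0$, which is exactly $x\in S_Q(b,a-d,0)$. Since $x=0$ trivially solves this, the set is nonempty, and for each such $x$ we read off $\lambda=a+bx\in\sigma_l(A)$ with eigenvector $(1,x)^T$; this proves part (2). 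For the second type, $A(x,1)^T=\lambda(x,1)^T$ gives $ax+b=\lambda x$ and $d=\lambda$, hence $(d-a)x=b$, i.e.\ $x\in S_L(d-a,b)$; whenever this set is nonempty we get $d\in\sigma_l(A)$ with eigenvector $(x,1)^T$, which is part (1).

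For part (3) I would specialize. Taking $x\in S_Q(b,a-d,0)$ with the extra condition $bx=0$ forces $\lambda=a$, and the constraint $bx^2+(a-d)x=0$ then reduces to $(a-d)x=0$; so $a\in\sigma_l(A)$ with eigenvector $(1,x)^T$ for every $x\in S_L(b,0)\cap S_L(a-d,0)$ (noting $0$ lies in this intersection). For the characterization of $d\in\sigma_l(A)$: by part (1), $S_L(d-a,b)\neq\emptyset$ gives $d\in\sigma_l(A)$ via a type-two eigenvector; alternatively, taking $x\in S_Q(b,a-d,0)$ with $a+bx=d$, i.e.\ $bx=d-a$, i.e.\ $x\in S_L(b,d-a)$, gives $d\in\sigma_l(A)$ via a type-one eigenvector (and one checks $S_L(b,d-a)\subseteq S_Q(b,a-d,0)$ since $bx=d-a$ implies $bx^2=(d-a)x$). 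Conversely, any eigenvector for $d$ is, after rescaling, of one of the two types, and a type-one eigenvector for $d$ forces $bx=d-a$ while a type-two eigenvector forces $(d-a)x=b$; hence $d\in\sigma_l(A)$ iff $S_L(d-a,b)\cup S_L(b,d-a)\neq\emptyset$, with the eigenvectors as displayed.

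The only place requiring genuine care — the ``main obstacle'' — is the converse direction of part (3): one must be sure that an eigenvector for $d$ of type one genuinely entails $bx=d-a$ and not merely membership in $S_Q(b,a-d,0)$, and conversely that $S_L(b,d-a)$ is really contained in $S_Q(b,a-d,0)$ so that the $\lambda$ produced in part (2) is exactly $d$ rather than something quasisimilar to it. This is handled by the left-multiplication identity: $bx=d-a\Rightarrow b x^2=(d-a)x\Rightarrow bx^2+(a-d)x=0$. Everything else is a direct transcription of the computation in the proof of Theorem \ref{thmb0}, and the use of Lemmas \ref{lemobs}, \ref{lemge}, \ref{eqQsolve} is identical. \BBox
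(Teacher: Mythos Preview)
Your proof is correct and follows essentially the same direct approach as the paper: reduce to the two eigenvector types $(1,x)^T$ and $(x,1)^T$, derive the equations $bx^2+(a-d)x=0$ and $(d-a)x=b$ respectively, and then specialize for part (3), with your treatment of the converse in (3) being if anything slightly more explicit than the paper's. Your opening remark about conjugation by the permutation matrix $P$ is a nice intuition, but note that Lemma~\ref{lemobs}(2) as stated concerns scalars $p,q\in\bh_s$, not matrices; since you explicitly abandon that route in favor of the direct computation, this does not affect the argument.
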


\begin{proof}
	(1)\quad Suppose that $\lambda\in \sigma_l(A)$ with an eigenvector $v=(x,1)^T$. That is 
	$$\left(\begin{array}{cc}
		a&b  \\
		0&d \\
	\end{array}\right)\left(\begin{array}{c}
		x  \\
		1   \\
	\end{array}\right)=\lambda\left(\begin{array}{c}
		x  \\
		1   \\
	\end{array}\right).$$
	Then \begin{equation}\label{rsym1c}\left\{
		\begin{aligned}
			ax+b=\lambda x,\\
			d=\lambda. 
		\end{aligned}
		\right.\end{equation}
	
	Hence \begin{equation}\label{b0eq4}(d-a)x=b.\end{equation}
 If $S_L(d-a,b)\neq \emptyset$ then 	
$ d\in \sigma_l(A),\,v=(x,1)^T\in V_l(d), \forall x\in S_L(d-a,b).$
	
(2)\quad Suppose that $\lambda\in \sigma_l(A)$ with an eigenvector $v=(1,x)^T$. Then
$$\left(\begin{array}{cc}
	a&b  \\
	0&d \\
\end{array}\right)\left(\begin{array}{c}
	1  \\
	x   \\
\end{array}\right)=\lambda\left(\begin{array}{c}
	1  \\
	x   \\
\end{array}\right).$$
That is \begin{equation}\label{rsymd}\left\{
	\begin{aligned}
		a+bx=\lambda,\\
		dx=\lambda x.
	\end{aligned}
	\right.\end{equation}

		Therefore\begin{equation}\label{b0eq3}bx^2+(a-d)x=0.\end{equation}
		Obviously, $0\in S_Q(b,a-d,0)$ and  \begin{equation}\lambda=a+bx\in  \sigma_l(A),\, v=(1,x)^T\in V_l(\lambda),\forall x\in S_Q(b,a-d,0).\end{equation} 
	 
	(3)\quad  By part (2), if $bx=0$ then  $a\in  \sigma_l(A)$ with $v=(1,x)^T\in V_l(a)$ for all $$x\in S_L(b,0)\cap S_Q(b,a-d,0)=S_L(b,0)\cap S_L(a-d,0).$$

	 If $a+bx=d$ is solvable, i.e.,   $S_L(b,d-a)\neq \emptyset $, then $d\in  \sigma_l(A)$ with $$v=(1,x)^T\in V_l(d),\,\forall x\in S_L(b,d-a)\cap S_Q(b,a-d,0)=S_L(b,d-a).$$
	 Combining with part (1), we have 
	 $d\in  \sigma_l(A)$ if and only if $S_L(d-a,b)\cup S_L(b,d-a)\neq \emptyset.$
	 Furthermore
	 \begin{equation*} d\in  \sigma_l(A)\mbox{ with eigenvectors }\left\{
	 	\begin{aligned}
	 		v=(x,1)^T,\, \forall x\in S_L(d-a,b);\\
	 		v=(1,x)^T,\, \forall x\in S_L(b,d-a).
	 	\end{aligned}
	 	\right.\end{equation*}

	\end{proof}

\begin{exam}\label{exam3.2}
	Let	$A=\left(\begin{array}{cc}
		-1+\bi&1+\bk  \\
		0&\bi+\bk\\
	\end{array}\right).$ That is, $a=-1+\bi, b=1+\bk, d=\bi+\bk$.
Since $$S_L(d-a,b)=S_L(b,d-a)=S_L(1+\bk,1+\bk)=\{x\in \bh_s:x=\frac{1+\bk}{2}+\frac{(1-\bk)y}{2},\forall y\in \bh_s\},$$
by Theorem \ref{thmc0}, we have  $$d=\bi+\bk\in  \sigma_l(A),\,v_1=(x,1)^T,\,v_2=(1,x)^T\in V_l(\bi+\bk),\forall x\in S_L(1+\bk,1+\bk).$$

	Note that $$S_Q(b,a-d,0)=\{x\in\bh_s:(1+k)(x^2-x)=0\}.$$ 
	By Lemma \ref{lemge}, we have
	$$S_Q(b,a-d,0)=\{x\in\bh_s:(x-\frac{1}{2})^2=\frac{1}{4}+(1-\bk)y,\forall y\in \bh_s\}.$$
	
	Let $x-\frac{1}{2}=x'=x_0+x_1\bi+x_2\bj+x_3\bk$ and $y=y_0+y_1\bi+y_2\bj+y_3\bk$. The above equation implies that
	\begin{equation}\left\{
		\begin{aligned}
			x_0^2-x_1^2+x_2^2+x_3^2&=&\frac{1}{4}+y_0-y_3,\\
			2x_0x_1&=&y_1-y_2,\\
			2x_0x_2&=&y_2-y_1,\\
			2x_0x_3&=&y_3-y_0.
		\end{aligned}
		\right.\end{equation}
	If $x_0=0$ then we need $y_1=y_2,y_0=y_3$ and $-x_1^2+x_2^2+x_3^2-\frac{1}{4}=0$;	
	if $x_0\neq 0$ then  $x_2=-x_1$ and $(x_0+x_3)^2=\frac{1}{4}.$  
	
		Hence $$S_Q(b,a-d,0)=S_1\cup S_2,$$ 
	where
	$$S_{1}=\{x=\frac{1}{2}+x_1\bi+x_2\bj+x_3\bk,\mbox{ where } -x_1^2+x_2^2+x_3^2-\frac{1}{4}=0\}$$
and
$$S_{2}=\{x=\frac{1}{2}+x_0+x_1\bi-x_1\bj+(\pm \frac{1}{2}-x_0)\bk,\forall x_0\neq 0,x_1\in \br\}.$$
	 Therefore 
	$$\lambda=-1+\bi+(1+\bk)x\in \sigma_l(A),\,v=(x,1)^T\in V_l(\lambda),\forall x\in S_Q(b,a-d,0).$$ 
\end{exam}

\begin{thm}\label{mainthm}
	Let $A=\left(\begin{array}{cc}
		a&b  \\
		c&d   \\
	\end{array}\right)\in \bh_s^{2\times 2}$ with $b\neq 0$ and $c\neq 0$.  Then
	\begin{itemize}
		\item[(1)]  If $S_Q(b,a-d,-c)\neq \emptyset$ then
		\begin{equation}\lambda=a+bx\in  \sigma_l(A),\,v=(1,x)^T\in V_l(\lambda),\forall x\in S_Q(b,a-d,-c).\end{equation}

		\item[(2)] If $S_Q(c,d-a,-b)\neq \emptyset$ then \begin{equation}d+cx\in  \sigma_l(A),\,v=(x,1)^T\in V_l(\lambda),\forall x\in S_Q(c,d-a,-b).\end{equation}
	\end{itemize}
\end{thm}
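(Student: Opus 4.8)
The plan is to proceed exactly as in the proofs of Theorems~\ref{thmb0} and~\ref{thmc0}, reducing the left eigenvalue problem for the full matrix to a single quadratic equation over $\bh_s$ and then invoking Lemma~\ref{eqQsolve} to know that quadratic is solvable whenever its solution set is declared nonempty. First I would recall, as in the earlier proofs, that any eigenvector $v\in V_l(\lambda)$ contains at least one invertible component (by Definition~\ref{def2}), and that by Lemma~\ref{lemobs}(3) we may rescale $v$ on the right by a suitable $\mu\in\bh_s-Z(\bh_s)$; so it suffices to treat the two normalized shapes $v=(1,x)^T$ and $v=(x,1)^T$ separately, which will produce the two parts of the theorem.

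For part~(1), I would substitute $v=(1,x)^T$ into $Av=\lambda v$, obtaining the system $a+bx=\lambda$ and $c+dx=\lambda x$. Eliminating $\lambda$ from the second equation using the first gives $c+dx=(a+bx)x$, i.e.\ $bx^2+(a-d)x-c=0$, which says precisely $x\in S_Q(b,a-d,-c)$. Conversely, given any $x\in S_Q(b,a-d,-c)$, set $\lambda=a+bx$; then both equations of the system hold by construction, so $Av=\lambda v$ with $v=(1,x)^T$. The only point needing care is the eigenvector condition: the first component of $v$ is $1\notin Z(\bh_s)$, so $v\notin Z(\bh_s)^2$ automatically, and hence $\lambda=a+bx\in\sigma_l(A)$ with $v\in V_l(\lambda)$ in the sense of Definition~\ref{def2}. (One should also note that since $b\neq0$, Lemma~\ref{eqQsolve} applies to $bx^2+(a-d)x-c=0$, so ``$S_Q(b,a-d,-c)\neq\emptyset$'' is exactly the statement that this equation is solvable.)

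For part~(2) I would do the symmetric computation with $v=(x,1)^T$: the system becomes $ax+b=\lambda x$ and $cx+d=\lambda$, and eliminating $\lambda$ yields $ax+b=(cx+d)x$, i.e.\ $cx^2+(d-a)x-b=0$, which is $x\in S_Q(c,d-a,-b)$. Setting $\lambda=d+cx$ and running the argument backwards gives $Av=\lambda v$ with $v=(x,1)^T$; again the second component being $1$ forces $v\notin Z(\bh_s)^2$, so $\lambda=d+cx\in\sigma_l(A)$. Here $c\neq0$ guarantees Lemma~\ref{eqQsolve} applies. I do not expect any genuine obstacle: the content is purely the algebraic elimination of $\lambda$ together with bookkeeping of the noncommutative order (all products $bx$, $cx$, $(a+bx)x$, $(cx+d)x$ must be written with the scalar on the correct side, since $\bh_s$ is noncommutative), and the verification that the normalized eigenvectors lie outside $Z(\bh_s)^2$. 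If anything is delicate, it is only making explicit that the two parts together are \emph{not} claimed to be exhaustive in this theorem — that the ``if and only if'' characterization (every left eigenvalue arises from one of the two quadratics) is deferred, presumably because passing from a general eigenvector to one of the normalized forms requires a right multiplier $\mu$ avoiding $Z(\bh_s)$, and one must check such a $\mu$ exists; but for the stated one-directional claims this subtlety does not arise.
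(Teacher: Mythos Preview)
Your argument is correct and follows the same template as the paper's: normalize the eigenvector to $(1,x)^T$ or $(x,1)^T$, write out the two scalar equations from $Av=\lambda v$, eliminate $\lambda$, and read off the quadratic $bx^2+(a-d)x-c=0$ or $cx^2+(d-a)x-b=0$.

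The only organizational difference is that the paper first writes $A=aE_2+B$ with $B=\left(\begin{smallmatrix}0&b\\c&d-a\end{smallmatrix}\right)$, finds the eigenvalues $\mu$ of $B$ (where the zero in the upper-left corner makes one equation read simply $bx=\mu$ or $b=\mu x$), and then invokes Lemma~\ref{lemobs}(1) to pass to $\lambda=a+\mu$. You instead substitute directly into $Av=\lambda v$ and get $\lambda=a+bx$ (resp.\ $\lambda=cx+d$) from the first (resp.\ second) row without any detour. Your route is slightly more economical, since it avoids both the auxiliary matrix $B$ and the appeal to Lemma~\ref{lemobs}; the paper's route has the mild advantage that the intermediate system (\ref{rsym51}) has one equation already solved for $\mu$. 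Either way the algebra and the final quadratics are identical, and your remark that the component $1$ automatically keeps $v\notin Z(\bh_s)^2$ is exactly what is needed to satisfy Definition~\ref{def2}.
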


\begin{proof}
	Note that 
\begin{equation}\label{decompAs5}A=\left(\begin{array}{cc}
		a&b  \\
		c&d   \\
	\end{array}\right)=aE_2+\left(\begin{array}{cc}
		0&b  \\
		c&d-a \\
	\end{array}\right)=:aE_2+B.\end{equation} 
	Suppose that  $\mu\in \sigma_l(B)$ with $v\in V_l(\mu)$. Hence we may have two types of eigenvectors in $V_l(\mu)$: $$v=(1,x)^T\mbox{  or }v=(x,1)^T.$$

(1)\quad  Suppose that $\mu\in \sigma_l(B)$ with an eigenvector $v=(1,x)^T$.
Then
\begin{equation}\label{firstv}\left(\begin{array}{cc}
		0&b  \\
		c&d-a \\
	\end{array}\right)\left(\begin{array}{c}
		1  \\
		x   \\
	\end{array}\right)=\mu\left(\begin{array}{c}
		1  \\
		x   \\
	\end{array}\right).\end{equation} 
That is \begin{equation}\label{rsym51}\left\{
	\begin{aligned}
		bx=\mu,\\
		c+(d-a)x=\mu x.
	\end{aligned}
	\right.\end{equation}
Therefore\begin{equation}\label{beq}bx^2+(a-d)x-c=0.\end{equation}
By Lemma \ref{lemobs} and its proof,  if $S_Q(b,a-d,-c)\neq \emptyset$ then $$\lambda=a+\mu=a+bx\in  \sigma_l(A),\,v=(1,x)^T\in V_l(\lambda),\forall x\in S_Q(b,a-d,-c).$$

(2)\quad Suppose that $\mu\in \sigma_l(B)$ with an eigenvector $v=(x,1)^T$.
Then
$$\left(\begin{array}{cc}
	0&b  \\
	c&d-a \\
\end{array}\right)\left(\begin{array}{c}
	x  \\
	1   \\
\end{array}\right)=\mu\left(\begin{array}{c}
	x  \\
	1   \\
\end{array}\right).$$
That is \begin{equation}\label{rsyme1}\left\{
	\begin{aligned}
		b=\mu x,\\
		cx+(d-a)=\mu.
	\end{aligned}
	\right.\end{equation}
Therefore\begin{equation}\label{ceq5}cx^2+(d-a)x-b=0.\end{equation}
By Lemma \ref{lemobs},  if $S_Q(c,d-a,-b)\neq \emptyset$ then
 $$\lambda=a+\mu=a+[cx+(d-a)]=d+cx\in  \sigma_l(A),\,v=(x,1)^T, \forall x\in S_Q(c,d-a,-b).$$
\end{proof}

\begin{exam}\label{exam3.3}
	Let	$A=\left(\begin{array}{cc}
		1&\bi  \\
		\bj&\bk\\
	\end{array}\right).$ That is, $a=1, b=\bi,c=\bj, d=\bk$.\\
		Let $p(x)=x^2+(-\bi-\bj)x+\bk=0$. Then the   companion  polynomial is  $c(x)=x^4-1=(x^2+1)(x^2-1)$. We have two pairs $(T, N)=(0,1)$ and $(T, N)=(0,-1)$.
		  By  Lemma \ref{lemge}, we have
		$$S_L(-\bi-\bj,1-\bk)=\emptyset
		$$
		and $$S_L(-\bi-\bj,-1-\bk)=\{\frac{1}{2}(y_0+y_3)+[-\frac{1}{2}+\frac{1}{2}(y_1+y_2)]\bi+[\frac{1}{2}+\frac{1}{2}(y_1+y_2)]\bj+\frac{1}{2}(y_0+y_3)\bk\}.$$
		
		Hence $$S_L(-\bi-\bj,-1-\bk) \cap \{x\in \bh_s: \Re(x)=0, I_x=-1\}=\{\bj\}.$$
		By Lemma \ref{eqQsolve},	
		$$S_Q(1,-\bi-\bj,\bk)=\{\bj\}.$$

		Note that $$S_Q(b,a-d,-c)=\{x\in\bh_s:\bi x^2+(1-\bk)x-\bj=0\}=\{x\in\bh_s: x^2+(-\bi-\bj)x+\bk=0\}=\{\bj\}$$ 
				and 
		$$S_Q(c,d-a,-b)=\{x\in\bh_s:\bj x^2+(\bk-1)x-\bi=0\}=\{x\in\bh_s: x^2+(-\bi-\bj)x+\bk=0\}=\{\bj\}.$$
	By Theorem \ref{mainthm}, $A$ has left eigenvalue $\lambda=1+\bk$ with eigenvectors $v=(1,\bj)^T$ and $v=(\bj,1)^T$. We mention that  $A$  has no right eigenvalue (see \cite[Theorem 4.6]{Opfer16}). 
\end{exam}

\begin{exam}\label{exam3.4}
	Let	$A=\left(\begin{array}{cc}
		1& 1\\
		-3-\bi-\bj-\bk&1\\
	\end{array}\right).$ That is, $a=b=d=1, c=-3-\bi-\bj-\bk$.\\

	Example 2 in \cite{caoaxiom} implies that $$S_Q(b,a-d,-c)=\{x\in\bh_s: x^2+3+\bi+\bj+\bk=0\}=\emptyset$$ 
	and 
	$$S_Q(c,d-a,-b)=\{x\in\bh_s:(-3-\bi-\bj-\bk) x^2-1=0\}=\{x\in\bh_s: x^2=\frac{-3+\bi+\bj+\bk}{8}\}.$$
	Let $w=\frac{-3+\bi+\bj+\bk}{8}$. Since $I_w=\frac{1}{8}$ and $w_0+\sqrt{I_w}<0$,by	Lemma 1 (2) in \cite{caoaxiom}, we have 
	$$S_Q(c,d-a,-b)=\emptyset.$$
		By Theorem \ref{mainthm}, $A$ has no left eigenvalue. 
\end{exam}

\vspace{2mm}
{\bf Acknowledgments.}\quad
This work is supported by National Natural Science Foundation of China (11871379),
the Innovation Project of Department of Education of Guangdong Province (2018KTSCX231) and
Key project of  Natural Science Foundation  of Guangdong Province Universities (2019KZDXM025).

\end{document}